\newtheorem{thm}{Theorem}
\newtheorem{prop}[thm]{Proposition}
\newtheorem{remark}[thm]{Remark}
\newenvironment{rem}{\begin{remark}\normalfont}{\end{remark}}
\title[Peaks of cylindric plane partitions]{Peaks of cylindric plane partitions}
\author[D. Betea, A. Occelli]{Dan Betea\thanks{\href{mailto:dan.betea@gmail.com}{dan.betea@gmail.com}; supported by FWO Flanders project EOS 30889451.}\addressmark{1} \and Alessandra Occelli\thanks{\href{mailto:alessandra.occelli@tecnico.ulisboa.pt}{alessandra.occelli@tecnico.ulisboa.pt}; supported by the HyLEF ERC starting grant 2016. This project was completed while A.O. was a postdoctoral fellow at MSRI during the Program ``Universality and Integrability in Random Matrix Theory and Interacting Particle Systems''.}\addressmark{2,}\addressmark{3}}
\address{\addressmark{1} Department of Mathematics, KU Leuven, Belgium \\ \addressmark{2} Department of Mathematics, Instituto Superior T\'ecnico, Lisbon, Portugal \\ \addressmark{3} Mathematical Sciences Research Institute, Berkeley, CA, USA}
\abstract{We study the asymptotic distribution, as the volume parameter goes to 1, of the peak (largest part) of finite- or slowly-growing-width cylindric plane partitions weighted by their trace, seam, and volume. There are two natural asymptotic regimes depending on the trace/seam parameters, and in both cases we obtain asymptotics governed by finite temperature (periodic) analogues of the Bessel and Airy gap probabilities from random matrix theory. In particular, the distributions we obtain interpolate \emph{in more than one way} between two well-known extremal value distributions: the Gumbel distribution of maxima of iid random variables and the Tracy--Widom distribution of maxima of eigenvalues of random Hermitian matrices. We also interpret our results in terms of last passage percolation on a cylinder, which yields to interesting connections to the Kardar--Parisi--Zhang equation.}
\def\Z{\mathbb{Z}}
\def\N{\mathbb{N}}
\def\R{\mathbb{R}}
\def\P{\mathbb{P}}
\def\Id{\mathbbm{1}}
\def\Li{\mathrm{Li}_2}
\def\tr{\mathrm{tr}\,}
\def\sm{\mathrm{sm}\,}
\def\Bessel{\mathrm{\,Bessel}}
\def\Airy{\mathrm{\,Airy}}
\def\Ai{\mathrm{\,Ai}}
\begin{document}

\maketitle

\section{Introduction and main results}

\paragraph{Background and motivation.} Cylindric plane partitions are better explained in a picture than a formula: they are plane partitions wrapped around the cylinder as in Figure~\ref{fig:cyl} (left). They have been studied in the combinatorial and probabilistic literature in various guises, and we mention the works of Gessel--Krattenthaler~\cite{GK97} and Borodin~\cite{Bor07} as two notable examples. We note that the latter article is probabilistic in nature and deals with the local nature of the correlation functions ``in the bulk'' as the size of the partitions (or the volume parameter) grows (resp.\,approaches 1). 

It has been known for a while that cylindric boundary conditions for combinatorial objects correspond to finite temperature systems in the sense of quantum mechanics, with inverse temperature the same as the cylinder's circumference. An example to illustrate both the combinatorics and the mathematical physics is the cylindric Plancherel measure discussed in~\cite{BB19} (and introduced in~\cite{Bor07}). The motivation for this paper stems from there: by studying simple distributions on cylindric plane partitions, we obtain interesting asymptotic behavior for their peaks that ``crosses over'' between a large (infinite) temperature regime governed by the Gumbel distribution (universally the asymptotic maximum of iid random variables that have Gaussian-like tails) and a small (zero) temperature regime governed by the Tracy--Widom~\cite{TW94_airy} distribution (universally the asymptotic maximum of correlated spectra of Hermitian matrices with iid entries).

\paragraph{Main result.} In this work we initiate the study of the peaks/largest parts of such partitions. We limit ourselves to a cylinder of finite or slowly growing circumference $2N$ and consider the case of volume-trace-and-seam-distributed such partitions that contain only one peak (and also only one ``bottom''). In technical terms, we look at cylindric plane partitions $\Lambda$ that are in bijection with a sequence of $2N$ interlacing ordinary integer partitions as follows:
\begin{equation} \label{eq:interlacing}
    \Lambda = \{ \mu = \lambda^{(-N)} \prec \cdots \prec \lambda^{(-1)} \prec \lambda^{(0)} = \lambda \succ \lambda^{(1)} \succ \cdots \succ \lambda^{(N)} = \mu \}. 
\end{equation}
The largest (ordinary) partition $\lambda$ gives the peak $\lambda_1$ of $\Lambda$ (the size of the inner-most central column of boxes), and also the trace: $\tr \Lambda := |\lambda|$ where $|\lambda| = \sum_i \lambda_i$ denotes the size of an integer partition. The smallest partition $\mu$ gives the seam of $\Lambda$: $\sm \Lambda := |\mu|$. The volume of $\Lambda$, denoted $|\Lambda|$, is the number of cubes in it: $|\Lambda| := \sum_{-N \leq i < N-1} |\lambda^{(i)}|$.


We fix a positive integer $N$ and two numbers $0 \leq a, q \leq 1$ such that $aq < 1$. The probability distribution considered throughout is the following trace-seam-volume one:
\begin{equation} \label{eq:cyl_dist}
    \P(\Lambda) = \frac{a^{\tr \Lambda} \cdot (a^{-1} q^{-N})^{\sm \Lambda} \cdot q^{|\Lambda|}} {Z} = \frac{a^{|\lambda|} \cdot (a^{-1} q^{-N})^{|\mu|} \cdot q^{\sum_i |\lambda^{(i)}|}} {Z}
\end{equation}
where $Z = (q^{-N}; q^{-N})^{-1}_\infty \prod_{1 \leq i, j \leq N} (q^{i+j-1}; q^N)_\infty^{-1}$ is the total weight/partition function (and $(x; q)_n = \prod_{0 \leq i \leq n-1} (1-x q^i)$ is the $q$-Pochhammer symbol). Our main result concerns the asymptotic distribution of $\lambda_1$ as $q \to 1-$. It is as follows.

\begin{thm} \label{thm:main}
    Let $\lambda_1$, using~\eqref{eq:interlacing}, be the largest part of a width $2N$ cylindric plane partition distributed as in~\eqref{eq:cyl_dist}. Then 
    \begin{itemize}
        \item (i) ($N$ fixed, $a$ grows critically) if $q = e^{-\epsilon}, a = e^{-\alpha \epsilon}$ as $\epsilon \to 0+$ and for $N$ fixed we have:
        \begin{equation} \label{eq:ftB_thm}
            \lim_{\epsilon \to 0+} \P \left( \frac{\lambda_1 - 2 \epsilon^{-1} \log \epsilon^{-1} } {\epsilon^{-1} } < s \right) = F^{(N)}_{\alpha, \Bessel} = \det(1-K^{(N)}_{\alpha, \Bessel})_{L^2(s, \infty)}
        \end{equation}
        where the determinant on the right is a Fredholm determinant of the following finite temperature Bessel kernel (operator) in exponential coordinates
        \begin{equation} \label{eq:ftB_kernel}
            K^{(N)}_{\alpha, \Bessel} (x,y) = e^{-\frac{x}{2}-\frac{y}{2}} \int_{0}^{\infty} \frac{1}{1+u^N} J_\alpha (2 \sqrt{u e^{-x}}) J_\alpha (2 \sqrt{u e^{-y}}) du
        \end{equation}
        with $J$ the Bessel function of the first kind;
        \item (ii) ($N$ grows slowly, $a$ fixed) if $q = e^{-\epsilon}, N = \beta \epsilon^{-2/3}$ as $\epsilon \to 0+$ and with $0 < a < 1$ fixed, we have (below $c_1 = - 2 \log (1-\sqrt{a}), c_2 = 2^{-1/3} a^{1/6} (1-\sqrt{a})^{-2/3}$):
        \begin{equation} \label{eq:ftA_thm}
            \lim_{\epsilon \to 0+} \P \left( \frac{\lambda_1 - c_1 \epsilon^{-1} } {c_2 \epsilon^{-1/3} } < s \right) = F^{(\beta c_2)}_{\Airy} = \det(1-K^{(\beta c_2)}_{\Airy})_{L^2(s, \infty)}
        \end{equation}    
        where the Fredholm determinant on the right is for the finite temperature Airy kernel
        \begin{equation} \label{eq:ftA_kernel}
            K^{(\beta)}_{\Airy} (x,y) = \int_{-\infty}^{\infty} \frac{e^{\beta v}}{1 + e^{\beta v}} \Ai(x+v) \Ai(y+v) dv
        \end{equation}
        with $\Ai$ the (exponentially decaying at $\infty$) Airy function.
    \end{itemize}
\end{thm}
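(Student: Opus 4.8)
The plan is the standard two-step strategy for edge/extreme-value statistics of such models. \emph{Step 1 (exact determinantal description).} The measure~\eqref{eq:cyl_dist} is a periodic Schur process: the sequence~\eqref{eq:interlacing} is an ascending path from $\mu$ to $\lambda=\lambda^{(0)}$ followed by a descending path back to $\mu$, with the identification $\lambda^{(-N)}=\lambda^{(N)}$ supplying the periodicity, and each interlacing step, being a horizontal strip, carries a single-variable skew Schur function $s_{\bullet/\bullet}(x)=x^{|\bullet|-|\bullet|}$. Writing $a^{|\lambda|}=(\sqrt a)^{|\lambda|-|\mu|}\,(\sqrt a)^{|\lambda|-|\mu|}\,a^{|\mu|}$ and absorbing the two factors $(\sqrt a)^{|\lambda|-|\mu|}$ symmetrically into the ascending and descending specializations, and $a^{|\mu|}(a^{-1}q^{-N})^{|\mu|}=q^{-N|\mu|}$ into a shift of the periodicity parameter, one checks that~\eqref{eq:cyl_dist} becomes a periodic Schur process with periodicity parameter $q^{N}$ and $2N$ geometric specializations of the form $\sqrt a\,q^{j}$. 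By the free-fermion / transfer-matrix computation for the periodic Schur process (Borodin~\cite{Bor07}; see also~\cite{BB19}), the shifted Maya diagram $\{\lambda^{(0)}_i-i+\tfrac12\}_{i\ge 1}$ is a determinantal point process with an explicit kernel $K_{N,a,q}$ of double-contour-integral form, whose integrand contains a ratio $G(z)/G(w)$ of $q$-Pochhammer products, the Cauchy factor $\sqrt{z/w}/(z-w)$, the monomial $z^{-i}w^{j-1}$, and --- the signature of the finite-temperature structure --- a Fermi--Dirac-type factor $\bigl(1+q^{N}g(z)/g(w)\bigr)^{-1}$, equivalently a signed geometric series over the fermionic winding number. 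Since $\{\lambda_1<m\}$ is precisely the gap event ``no particle at positions $\ge m-\tfrac12$'', we get the exact identity $\P(\lambda_1<m)=\det(1-K_{N,a,q})_{\ell^2(\{m,m+1,\dots\})}$.

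\emph{Step 2 (case (i): a hard-edge/Bessel limit).} Put $q=e^{-\epsilon}$, $a=e^{-\alpha\epsilon}$, $N$ fixed, and $m=\lfloor 2\epsilon^{-1}\log\epsilon^{-1}+x\epsilon^{-1}\rfloor$. Euler--Maclaurin turns the $q$-Pochhammer ratio into $\exp(\epsilon^{-1}(\text{dilogarithm terms})+\cdots)$ --- this is why $\Li$ appears --- and the relevant saddle sits at the accumulation point $z=1$ of the poles of $G$. Because $a\to1$, two packets of singularities coalesce at $z=1$, so the local behaviour there is of confluent/Bessel rather than Airy type: rescaling the contours as $z-1\sim\sqrt{\epsilon\,e^{-x}}$ produces Bessel functions $J_\alpha$, the index $\alpha$ entering through the $q$-Gamma $\to$ Gamma limit of the confluent Pochhammer factor $(q^{\alpha};q)_\infty$. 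Under this rescaling the discrete Fredholm determinant converges to a Fredholm determinant on $L^2(s,\infty)$; the rescaled winding sum becomes $\int_0^\infty\frac{du}{1+u^N}$, the $\sqrt{z/w}$ and $z^{-m}$ factors yield the conjugation prefactor $e^{-x/2-y/2}$ (which does not change the determinant), and one recognises $K^{(N)}_{\alpha,\Bessel}$ of~\eqref{eq:ftB_kernel}.

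\emph{Step 3 (case (ii): a finite-temperature Airy/edge limit).} Now $a\in(0,1)$ is fixed, $q=e^{-\epsilon}$, $N=\beta\epsilon^{-2/3}\to\infty$, and $m=\lfloor c_1\epsilon^{-1}+c_2 s\,\epsilon^{-1/3}\rfloor$. The constraint $a^{|\lambda|}$ now pins $\lambda_1$ at the macroscopic scale $\epsilon^{-1}$; the exponential action $S(z)$, whose dominant part is $\epsilon^{-1}$ times a combination of dilogarithms in $\sqrt a$ and $\xi:=m\epsilon$ minus $\xi\epsilon^{-1}\log z$, has an ordinary saddle for $\xi<c_1$ that degenerates to a double critical point exactly at $\xi=c_1=-2\log(1-\sqrt a)$; expanding to third order gives $c_2=2^{-1/3}a^{1/6}(1-\sqrt a)^{-2/3}$ from $S'''(z_c)$ and produces Airy functions upon rescaling $z-z_c\sim\epsilon^{1/3}$. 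The new feature is that $q^{N}=e^{-\beta\epsilon^{1/3}}$ tends to $1$ on the same scale $\epsilon^{1/3}$ as the Airy fluctuations, so the Fermi--Dirac factor together with the winding sum does not trivialise: it converges to an $\int_{\R}\frac{e^{\beta c_2 v}}{1+e^{\beta c_2 v}}\,dv$-weighting of the Airy factors (the effective temperature being $\beta c_2$ because $N\epsilon=\beta\epsilon^{1/3}$ is read off in units of the Airy lattice scale $\propto c_2\epsilon^{1/3}$), giving exactly $K^{(\beta c_2)}_{\Airy}$ of~\eqref{eq:ftA_kernel}.

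\emph{The main obstacle.} Step 1 is essentially bookkeeping, and the skeleton of Steps 2--3 is standard steepest descent; the real difficulties are (a) in case (i), choosing rescaled contours that correctly thread the accumulating poles of the $q$-Pochhammer symbols so as to capture the Bessel --- not Airy --- local kernel, and then identifying the resulting double integral with the single integral $\int_0^\infty\frac{1}{1+u^N}J_\alpha J_\alpha$ via a residue/Bessel-integral identity; (b) in case (ii), controlling the slow growth of $N$: one must show the winding sum becomes a genuine Riemann sum for $\int dv$ and carry out the Airy saddle analysis uniformly in the winding variable, rather than having the finite-temperature structure wash out or blow up. In both regimes the closing --- and only technical --- step is to upgrade the pointwise kernel convergence to trace-norm convergence (using exponential decay of the kernels away from the edge and along the non-critical parts of the contours) so as to exchange the limit with the Fredholm determinant.
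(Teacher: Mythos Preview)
Your overall architecture matches the paper's, but two structural steps are either missing or set up incorrectly.

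First, in Step~1 you assert that $\{\lambda^{(0)}_i-i+\tfrac12\}$ is itself determinantal. For the periodic Schur process this is false: only the \emph{shift-mixed} (grand canonical) point process $\{\lambda_i-i+\tfrac12+c\}$ is determinantal, where $c\in\Z$ is an independent random integer with theta-weight $\P(c)\propto t^c q^{Nc^2/2}/\theta_3(t;q^N)$. The paper's kernel therefore carries a propagator $\kappa(z,w)$ built out of Jacobi theta functions (not a single Fermi--Dirac factor as you write), and it is this $\kappa$ that encodes the finite-temperature structure: in both regimes one has $\kappa(z,w)\sim \pi/\bigl(M\sin\tfrac{\zeta-\omega}{M}\bigr)$ after rescaling (with $M=N$ in~(i), $M=\beta c_2$ in~(ii)), and the identity $\tfrac{\pi}{M\sin((\zeta-\omega)/M)}=\int_{\R}\tfrac{e^{(M+\omega-\zeta)v}}{1+e^{Mv}}\,dv$ is what produces the Fermi factors $1/(1+u^N)$ and $e^{\beta v}/(1+e^{\beta v})$. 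The shift must then be removed at the end by showing $\epsilon\cdot c\to 0$ in probability; this is a separate (easy) step you do not mention.

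Second, your case~(i) analysis would fail as written. Since $a=e^{-\alpha\epsilon}\to1$, the arguments of the Pochhammer symbols in $F(z)$ are of the form $q^{(\alpha+1)/2}z^{\pm 1}$ with $z\to1$, and the dilogarithm estimate $(Bq;q)_\infty\sim\exp(-\epsilon^{-1}\Li(B))$ is inapplicable (it needs $B$ bounded away from $1$). The correct tool is the $q$-Gamma asymptotic $\log(q^c;q)_\infty=-\tfrac{\pi^2}{6}\epsilon^{-1}+(\tfrac12-c)\log\epsilon+\tfrac12\log(2\pi)-\log\Gamma(c)+O(\epsilon)$; after the substitution $(z,w)=(e^{\epsilon\zeta},e^{\epsilon\omega})$ (linear in $\epsilon$, not $z-1\sim\sqrt{\epsilon}$) the ratio $F(z)/F(w)$ becomes $f(\zeta)/f(\omega)$ with $f(\zeta)=\Gamma(\tfrac{\alpha+1}{2}+\zeta)/\Gamma(\tfrac{\alpha+1}{2}-\zeta)$, and $J_\alpha$ is then read off directly from its Mellin--Barnes representation $J_\alpha(X)=\tfrac{1}{2\pi i}\int\tfrac{\Gamma(-Z)}{\Gamma(\alpha+1+Z)}(X/2)^{\alpha+2Z}\,dZ$. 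There is no dilogarithm action and no saddle-point step in part~(i); it is a direct Gamma-function limit. Your case~(ii) outline, by contrast, is essentially the paper's argument (dilogarithm action $S$, double critical point at $z=1$, cubic Taylor expansion producing Airy) once the correct $\kappa$ and shift-mixing are in place.
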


The distributions appearing above are Fredholm determinants. Recall that an operator $K$ on $L^2(X)$ and having kernel $K(x,y)$ ($X=(s, \infty)$ for us) acts via matrix multiplication $(Kf) (x) = \int_X K(x, y) f(y) dy$. If $K$ is trace class---see e.g.~\cite{Rom15}, the \emph{Fredholm determinant} of $1-K$ ($1$ the identity operator) on $L^2 (X)$ is the quantity
\begin{equation}
    \det(1-K)_{L^2(X)} = 1 + \sum_{m \geq 1} \frac{(-1)^m}{m!} \int_X \cdots \int_X \det_{1 \leq i, j \leq m} [K(x_i, x_j)] d x_1 \cdots d x_m.
\end{equation}

\paragraph{A few important remarks.} We make the following observations.

\begin{rem}
The kernel $K^{(N)}_{\alpha, \Bessel}$ is similar to the finite-temperature Bessel kernel recently introduced in the physics literature by Lacroix-A-Chez-Toine et al~\cite{LDMS18}, at inverse finite temperature $N$. 
\begin{itemize}
    \item As $N \to \infty$ the Fermi factor $1/(1+u^N)$ becomes the indicator $\Id_{[0,1]}$ and the kernel becomes
    \begin{equation}
        K^{(\infty)}_{\alpha, \Bessel} (x, y) = e^{-\frac{x}{2}-\frac{y}{2}} B_{\alpha} (e^{-x}, e^{-y})
    \end{equation}
    with $B_{\alpha}(x, y) = \int_{0}^{1} J_\alpha (2 \sqrt{u x}) J_\alpha (2 \sqrt{u y}) du$ the Bessel kernel found in the scaling of the smallest eigenvalue of random matrix ensembles with a hard edge (e.g.\, the Laguerre or Jacobi ensembles scaled around 0, see~\cite{TW94_bessel}). Furthermore, the probability distribution $F^{(\infty)}_\alpha (s) = \det(1 - K^{(\infty)}_{\alpha, \Bessel})_{L^2(s, \infty)}$ (already at ``$0 = 1/(N=\infty)$ temperature'') interpolates between the standard Gumbel distribution $(\alpha = 0)$ and the Tracy--Widom distribution ($\alpha \to \infty$)~\cite{Joh08}.
    \item As $N \to 0$ (a combinatorially meaningless limit!) we have (upon a change of variables and using the orthogonality of Bessel $J$ functions) 
    \begin{equation}
        K^{(0)}_{\alpha, \Bessel} (x, y) = e^{-\frac{x}{2}-\frac{y}{2}} \delta(x - y)
    \end{equation}
    (in a distributional sense, with $\delta$ the Dirac delta measure) and $F^{(0)}(s) = \det(1 - K^{(0)}_{\alpha, \Bessel})_{L^2(s, \infty)} = e^{-e^{-s}}$ becomes the Gumbel distribution.
\end{itemize}
\end{rem}

\begin{rem}
    The kernel $K^{(\beta c_2)}_{\Ai}$ is Johansson's~\cite{Joh07} finite temperature Airy kernel (at inverse temperature $\beta c_2$). When $\beta \to \infty$ it becomes the usual Airy kernel (by the same argument as above, the Fermi factor in the integrand becoming $\Id_{[0,\infty)}$) with Fredholm determinant on $(s, \infty)$ the Tracy--Widom GUE distribution~\cite{TW94_airy} (the ``universal'' asymptotic distribution of the largest eigenvalue of Hermitian random matrices with iid entries). A more subtle $\beta \to 0$ limit recovers the Gumbel distribution---see~\cite{Joh07, BB19}. 
\end{rem}

\begin{rem}
    As already observed in~\cite{LDMS18}, one can presumably transition from our finite temperature Bessel kernel to the finite temperature Airy kernel by using the well-known asymptotics for Bessel functions~\cite[eq.~10.19.8]{DLMF}:
    \begin{equation}
        (2/\nu)^{\tfrac13} J_\nu (\nu + x \nu^{\tfrac13}) = \Ai(-2^{\tfrac13} x) + O(\nu^{-\tfrac23}), \quad \nu \to \infty 
    \end{equation}
    but we shall not pursue this purely analytical point here. This gives rise to the hard-to-soft edge transition in random matrix theory, see e.g.\,\cite{BF03}.
\end{rem}

\begin{rem}
    There are other limits one can take not covered here. For example, if $q \to 1$ while $a$ and $N$ are fixed, we expect Tracy--Widom fluctuations (zero temperature behavior). We also expect Tracy--Widom if $0<q<1$ is fixed (even if $a \to 1$ and possibly even if $N$ is growing). More interestingly, we can take $q, a \to 1$ as $N \to \infty$. Here the behavior is unexplored and we hope to address this in future work. There are interesting limits as $q$ and/or $a$ go to 0, and one of them leads to the cylindric Plancherel measure of~\cite{Bor07} and~\cite[Thm.~1.1]{BB19}.
\end{rem}

\begin{rem}
    One limit that is worth mentioning as it is combinatorial is that of $N \to \infty$ independent of $a, q$. If we take $N \to \infty$ with $a, q$ fixed then the cylindric geometry goes away (the sides go to infinity, the seam becomes $0$) and we look at regular trace-and-volume-weighted plane partitions. These were covered in~\cite{BO21} (see in particular Remark 6 there). Asymptotically we get either Bessel (in exponential coordinates) behavior or Tracy--Widom behavior depending on whether $a \to 1$ or is fixed. Alternatively, we can take $N$ to infinity after taking $a, q \to 1$ (see above), and we recover the same behavior. The two limits commute. 
\end{rem}

\begin{rem}
    An extended version of Thm.~\ref{thm:main} is also possible. That it, if one looks at the top $k$ lozenges in a neighborhood of the peak, the joint distribution of those is also asymptotically a Fredholm determinant, this time of a ``time-extended'' kernel. In the Bessel limit perhaps this is not too interesting as ``time'' (partition number around 0 in~\eqref{eq:interlacing}) remains discrete. In the continuous case though a rescaling is needed and one would then obtain, as was done in~\cite{BB19}, the finite temperature/periodic Airy process of~\cite{DMS17}.
\end{rem}

\begin{rem}
    Let us notice one last thing: for certain parameter ranges, our main theorem combined with recent results of~\cite{ARP21} could perhaps elucidate certain (perhaps expected) behavior of maxima of Gaussian free fields on a cylinder.
\end{rem}

\begin{figure} [ht!]
    \begin{center}
        \includegraphics[scale=0.5]{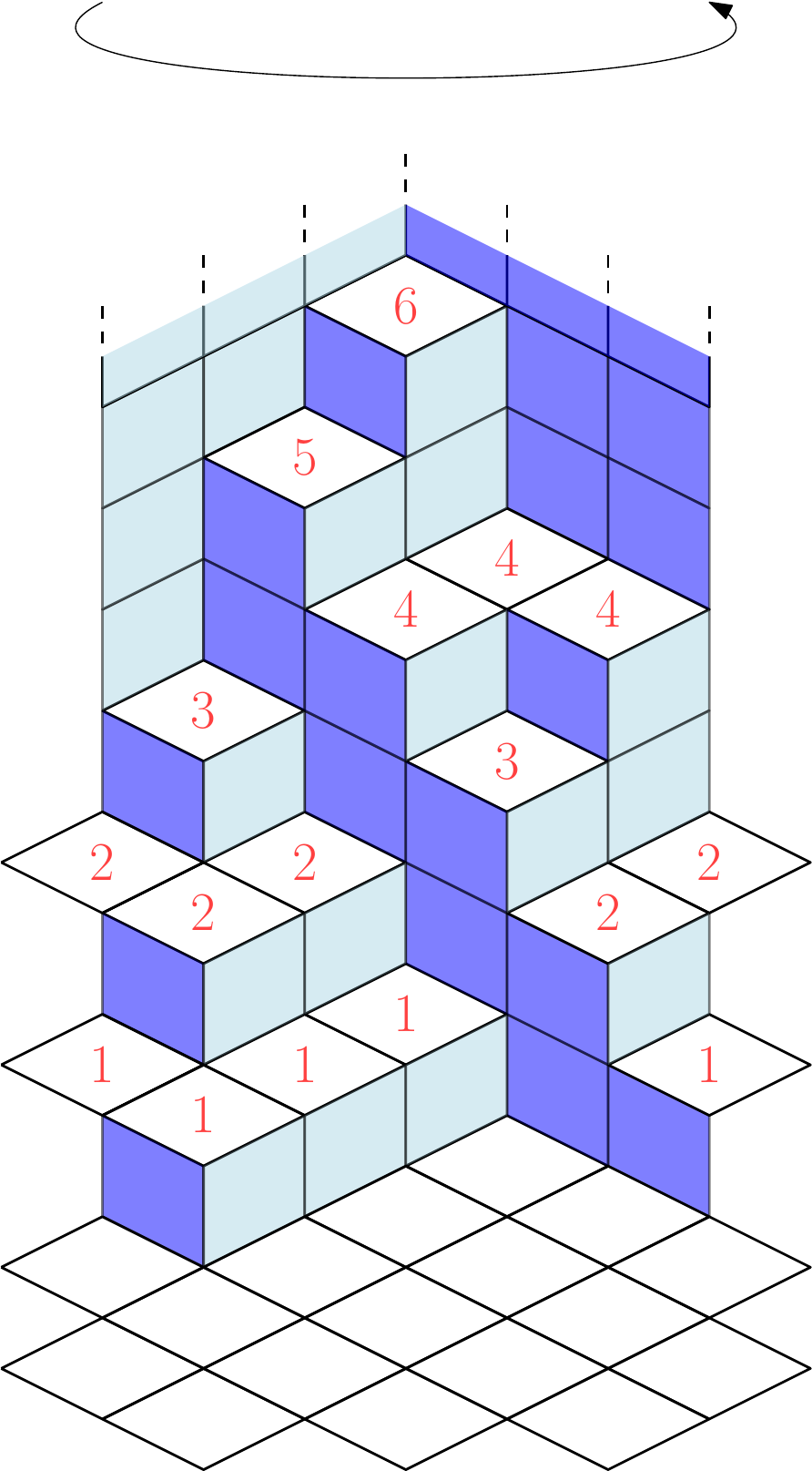}  \qquad  \includegraphics[scale=0.4]{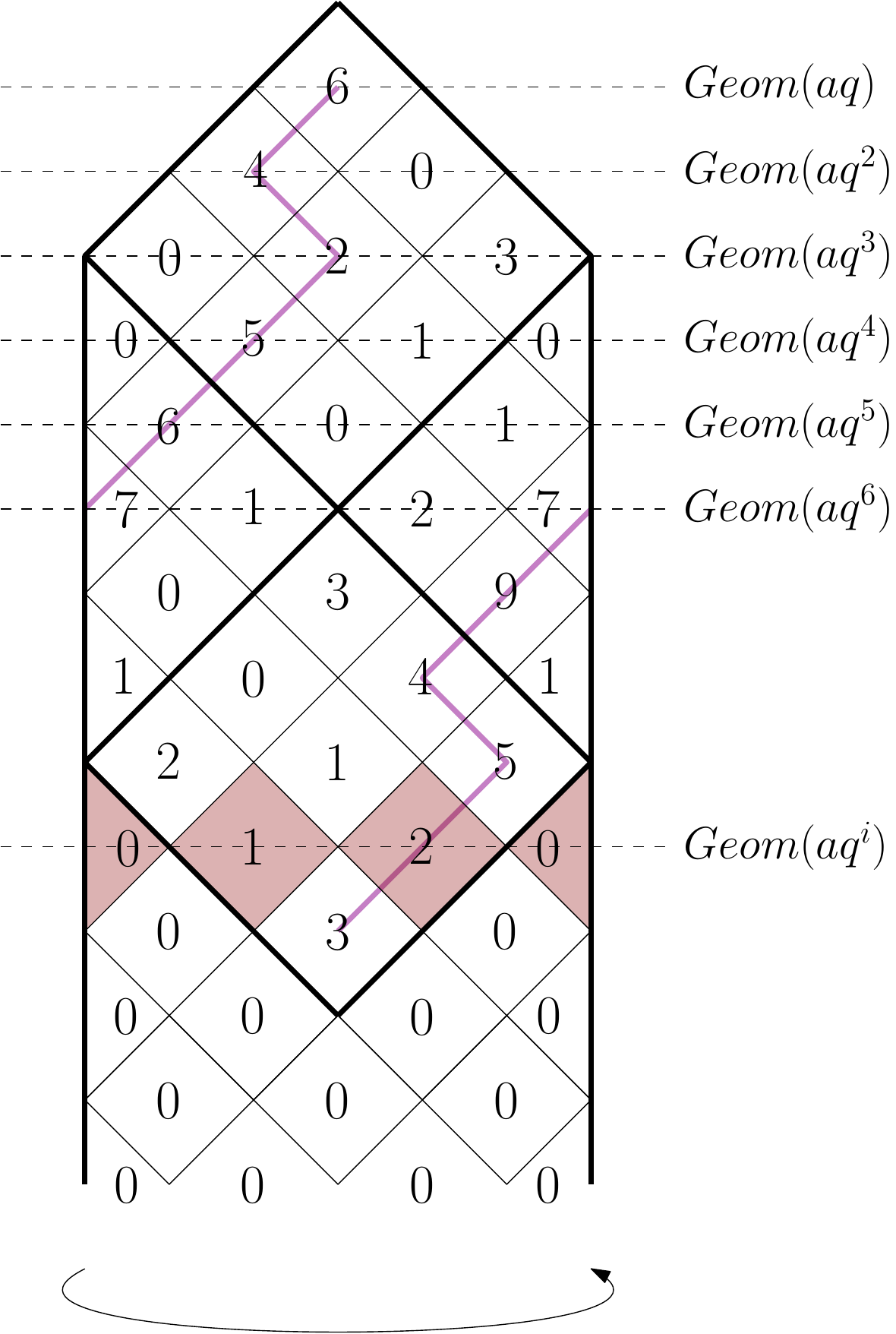}
    \end{center}
    \caption{\footnotesize{Left: a cylindric plane partition of width $2N = 6$, seam $3$, trace $11$, volume $44$, and a peak of $6$. Right: a depiction of last passage percolation on a tie. Each horizontal row $i$ of lozenges has iid $Geom(a q^i)$ random variables inside each lozenge. The longest down-right/left path has length $L=53$.}}
    \label{fig:cyl}
\end{figure}

\section{Last passage percolation and $q$-Whittaker measures}

In this section we connect the above results with a model of directed last passage percolation on a cylinder. Similar models were studied in~\cite{BBNV_FPSAC_19} for the case of reflecting strips (equivalently, vertically symmetric cylinders), and while the combinatorial tools apply mutatis-mutandis, the probability measures studied are different.

Fix numbers $0 < a, q < 1$ (for simplicity). Let us consider the following so-called last passage percolation on a tie (see also~\cite{BBNV_FPSAC_19}). In Figure~\ref{fig:cyl} (right) the two vertical sides are identified, so we are on a cylinder. The cylinder, of width $2N$, is tiled with $N \times N$ big squares (with bolded edges, not so important for what follows), and each such big square contains $N^2$ unit squares. In each unit square we have a geometric random variable, and they are all independent of one another. On horizontal slice $i \geq 1$, where a slice is a sequence of squares touching only at the corners and sharing a common horizontal line through their centers, the random variables are iid ${\rm Geom} (a q^i)$. (A random variable $X$ is ${\rm Geom} (t)$ if $\P(X=k) = (1-t) t^k$. Note that slice $i$ contains $\min (i, N)$ squares.) 

In the geometry described above, consider paths going from the top-most unit square down to $\infty$ and using only unit down-right or down-left steps. The length of a path is the sum of the random geometric integers it encounters. Now consider the longest such path, and call the length $L$. By Borel--Cantelli, the cylinder contains only a finite number of squares whose random variables are non-zero, and thus $L < \infty$ almost surely. We then have the following theorem. 

\begin{thm} \label{thm:lpp}
    It holds that 
    \begin{equation}
        L + \chi = \lambda_1 \quad \text{in\ distribution}
    \end{equation}where $\lambda_1$ is the peak of a cylindric plane partition distributed according to~\eqref{eq:cyl_dist}, $L$ is as above, and $\chi$ is a random variable independent of $L$ with explicit distribution $\P(\chi \leq m) = (q^{N(m+1)}; q^N)_\infty$. (In fact, $\chi = \kappa_1$ is the distribution of the first part of a random integer partition $\kappa$ with $\P(\kappa) = (q^N; q^N)^{-1}_\infty (q^N)^{|\kappa|}$).
\end{thm}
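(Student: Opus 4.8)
The plan is to realize both sides of the claimed identity as marginals of the same $q$-Whittaker (or, equivalently, Hall--Littlewood-type at the level of formulas) measure on a single integer partition, and then separate the two factors on the right-hand side via a known Cauchy-type identity. First I would recall the standard Gessel--Krattenthaler / Borodin correspondence between cylindric plane partitions satisfying the interlacing pattern~\eqref{eq:interlacing} and pairs of sequences of partitions; the volume-trace-seam weight~\eqref{eq:cyl_dist} factors, under this correspondence, into a product of skew Schur functions specialized at geometric progressions $a q^i$ and $a^{-1} q^{-N} q^j$. Using the branching/Cauchy identity for (skew) Schur functions, summing out all the intermediate partitions $\lambda^{(i)}$ for $i \neq 0$ leaves a measure on the single partition $\lambda = \lambda^{(0)}$ of the form $\P(\lambda) \propto s_\lambda(\x) s_\lambda(\y)$ for explicit geometric-progression specializations $\x, \y$ (this is exactly the principal specialization that turns Schur functions into $q$-analogues and makes the normalization $Z$ come out as the stated product of $q$-Pochhammer symbols). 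In particular $\lambda_1$, the peak, is distributed as the first part of $\lambda$ under this Schur measure.

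Next I would identify the last passage percolation side. The geometry of Figure~\ref{fig:cyl} (right) — a cylinder of width $2N$ with iid $\mathrm{Geom}(aq^i)$ weights on slice $i$, and down-right/down-left paths — is precisely the combinatorial setup whose longest path length is computed by an RSK-type bijection (the Berele or column insertion variant adapted to the strip/cylinder, as used in~\cite{BBNV_FPSAC_19}). That bijection shows that $L$ is distributed as $\tilde\lambda_1$ for a partition $\tilde\lambda$ under a Schur measure $\P(\tilde\lambda) \propto s_{\tilde\lambda}(\x') s_{\tilde\lambda}(\y')$ whose two specializations are the geometric progressions coming from the two halves of the cylinder — but with the crucial difference that the ``seam'' contribution is absent, i.e.\ one of the specializations is shortened by the single parameter corresponding to going all the way around. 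Concretely, the Schur measure for $\lambda_1$ has specializations in a common variable that the LPP measure for $L$ lacks; that missing variable is a single principal specialization $(q^N, q^{2N}, q^{3N}, \dots)$ — equivalently a specialization at the geometric progression with ratio $q^N$ — which is exactly what produces the distribution $\P(\kappa) = (q^N;q^N)_\infty^{-1} (q^N)^{|\kappa|}$ on an auxiliary partition $\kappa$.

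Finally I would assemble the identity. One clean way: write the Schur measure governing $\lambda_1$ as a measure on $\lambda$, and use the Cauchy identity to factor $s_\lambda$ summed against the extra principal specialization; this realizes $\lambda$ as the ``superposition'' $\lambda = \tilde\lambda \oplus \kappa$ in the sense of the corresponding last-passage/RSK picture, so that $\lambda_1 = \max$ along a path decomposes as $L$ (the path length in the bulk of the cylinder) plus an independent contribution $\kappa_1$ coming from the extra geometric specialization. The independence of $L$ and $\chi$ is exactly the statement that the Schur measure factors as a product over the two groups of specializations, and the marginal of $\kappa_1$ is computed from the classical formula $\P(\kappa_1 \le m) = \prod_{i \ge 1}(1 - t\, t^{i}\cdots)$-type product for a single geometric specialization, which here evaluates to $(q^{N(m+1)};q^N)_\infty$ by the $q$-binomial theorem.

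The main obstacle I expect is \emph{bookkeeping the specializations precisely} — getting the exact lengths and ratios of the two geometric progressions right on each side, and pinning down which single parameter is ``extra'' on the cylindric-plane-partition side versus the LPP side. The conceptual content (two Schur measures differing by one principal specialization, Cauchy identity splits off an independent first part) is standard, but the seam parameter $a^{-1} q^{-N}$ and the wrap-around of the cylinder make the indexing delicate; a sign or an off-by-one in the progressions would change $\chi$'s distribution. I would therefore verify the specialization matching by independently checking that the two partition functions agree: the product $Z = (q^{-N};q^{-N})^{-1}_\infty \prod_{1\le i,j\le N}(q^{i+j-1};q^N)_\infty^{-1}$ must equal (LPP normalization) $\times$ $(q^N;q^N)^{-1}_\infty$, and this consistency check is what makes the whole identification rigorous.
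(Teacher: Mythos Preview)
Your plan has a genuine gap at its first algebraic step. After summing out the intermediate $\lambda^{(i)}$, the $\lambda$-marginal is \emph{not} an ordinary Schur measure $s_\lambda(\x)s_\lambda(\y)$: because the seam $\mu=\lambda^{(\pm N)}$ need not vanish, what one obtains is the \emph{periodic} Schur measure
\[
\P(\lambda)\ \propto\ \sum_{\mu} (q^{N})^{|\mu|}\,\bigl[s_{\lambda/\mu}(\sqrt{a}\,q^{1/2},\dots,\sqrt{a}\,q^{N-1/2})\bigr]^{2},
\]
as the paper itself records in Section~3. The seam weight $(q^N)^{|\mu|}$ is not of the form $s_\mu(z)$ for any Schur specialization $z$, so it cannot be absorbed via the branching rule into ``one extra principal specialization $(q^N,q^{2N},\dots)$'' as you propose. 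Consequently the step ``use the Cauchy identity to factor $s_\lambda$ summed against the extra principal specialization; this realizes $\lambda$ as the superposition $\tilde\lambda\oplus\kappa$'' has no content: the Cauchy and branching identities relate partition functions, not first-part marginals, and do not by themselves produce a convolution statement $\lambda_1\stackrel{d}{=}\tilde\lambda_1+\kappa_1$ between two genuinely different measures. Your closing check that the normalizing constants match is necessary but nowhere near sufficient.

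The paper's argument is of a different nature and bypasses this issue: it is purely bijective, applying RSK and Greene's theorem directly on the cylindric array, exactly as in Theorem~2.1 of~\cite{BBNV_FPSAC_19} but with the reflecting sides of the tie replaced by identified (periodic) ones. In that construction the geometric weight array on the tie is put in weight-preserving bijection with the cylindric interlacing data; Greene's theorem identifies $L$ with the first row of the insertion shape, and the seam partition supplies, independently, the shift $\chi$. No Cauchy-type factorization is invoked. If you want an algebraic route, note that the distributional identity you are reaching for is essentially what links periodic Schur measures to $q$-Whittaker measures in~\cite{IMS21}, and that requires a new bijection rather than a Cauchy identity.
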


\begin{proof}
    The proof is the same as that of Thm.~2.1 of~\cite{BBNV_FPSAC_19}, with the exception that there the sides of the tie were reflecting, and here they are identified. In other words free boundary conditions are replaced by periodic boundary conditions. Note the proof uses nothing more than the classical Robinson--Schensted--Knuth correspondence~\cite{Knu70} and Greene's theorem~\cite{Gre74}, adapted to periodic/free boundary conditions. 
\end{proof}

Let us state another important remark. In recent work of Imamura--Mucciconi--Sasamoto~\cite{IMS21} it was proven that $L$ (via $\lambda_1$) is related to an observable of a so-called $q$-Whittaker measure. The $q$-Whittaker symmetric polynomials $P_\rho(x; q)$ and $Q_\rho(x; q)$ are the Macdonald $(q,t)$ symmetric polynomials~\cite{Mac95} in variables $x = (x_1, \dots, x_N)$ at $t=0$. What was proven in~\cite{IMS21} together with the above yields the following result. 

\begin{prop}
    One has $L + \chi = \rho_1 + \chi'$ in distribution where $\rho_1$ is the first part of a random partition distributed with the $q$-Whittaker measure
    \begin{equation}
        \P(\rho) = \tilde{Z}^{-1} P_\rho (\sqrt{a} q^{\frac{1}{2}}, \dots, \sqrt{a} q^{N - \frac{1}{2}}; q^N) Q_\rho (\sqrt{a} q^{\frac{1}{2}}, \dots, \sqrt{a} q^{N-\frac{1}{2}}; q^N)
    \end{equation}
    with $\tilde Z = \prod_{1 \leq i, j \leq N} (q^{i+j-1}; q^N)_\infty^{-1}$ (each polynomial has $N$ variables $x_i = \sqrt{a} q^{i-\tfrac12}, 1 \leq i \leq N$), and $\chi, \chi'$ are independent of everything with the same distribution as in Thm.~\ref{thm:lpp} $\P(\chi \leq m) = (q^{N(m+1)}; q^N)_\infty$.
\end{prop}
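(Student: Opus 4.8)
The plan is to chain Theorem~\ref{thm:lpp} with the result of Imamura--Mucciconi--Sasamoto~\cite{IMS21}. Theorem~\ref{thm:lpp} already supplies $L+\chi = \lambda_1$ in distribution, where $\lambda_1$ is the peak of a cylindric plane partition distributed by~\eqref{eq:cyl_dist} and $\chi$ is independent with $\P(\chi \le m) = (q^{N(m+1)};q^N)_\infty$; so it suffices to establish $\lambda_1 = \rho_1 + \chi'$ in distribution, with $\rho$ the $q$-Whittaker-distributed partition of the statement and $\chi'$ independent of it and having the same law as $\chi$.

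The first step would be to identify the marginal law of the central partition $\lambda = \lambda^{(0)}$ under~\eqref{eq:cyl_dist}. Since~\eqref{eq:interlacing} is an interlacing chain with identified ends $\lambda^{(-N)} = \lambda^{(N)} = \mu$, summing out every coordinate except $\lambda^{(0)}$ --- via the iterated branching (Pieri) rule for Schur functions, together with the classical dictionary under which a $q^{\mathrm{vol}}$-type weighting of diagonal slices amounts to a pair of Schur specializations at geometric progressions --- rewrites this marginal as a periodic Schur measure in the sense of Borodin~\cite{Bor07}, with period parameter $q^N$ and both Schur specializations equal to $(\sqrt a\, q^{1/2}, \dots, \sqrt a\, q^{N-1/2})$. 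Here the trace weight $a^{|\lambda|}$ and the seam twist $(a^{-1}q^{-N})^{|\mu|}$ in~\eqref{eq:cyl_dist} are exactly what makes the two specializations symmetric and pins the period to $q^N$.

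Next I would invoke~\cite{IMS21}: the periodic Schur measure just obtained is matched, through their correspondence, with the $q$-Whittaker measure of $q$-parameter $q^N$ built from the same specializations $(\sqrt a\, q^{1/2}, \dots, \sqrt a\, q^{N-1/2})$ --- that is, precisely the measure in the Proposition --- in such a way that the first parts agree up to an independent summand of $q^N$-geometric type: $\lambda_1 = \rho_1 + \chi'$ in distribution, with $\rho$ as stated and $\chi'$ independent with $\P(\chi' \le m) = (q^{N(m+1)};q^N)_\infty$. Concatenating this with Theorem~\ref{thm:lpp} yields $L + \chi = \lambda_1 = \rho_1 + \chi'$.

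The main obstacle is the bookkeeping in the first step: checking that the volume, trace, and seam weights in~\eqref{eq:cyl_dist} genuinely combine into a bona fide periodic Schur measure with \emph{exactly} the specializations $(\sqrt a\, q^{1/2}, \dots, \sqrt a\, q^{N-1/2})$ on both sides and period exactly $q^N$ --- i.e.\ getting every power of $q$ and of $a$ right --- and then verifying that this particular measure satisfies the hypotheses of the~\cite{IMS21} correspondence, so that its image is the $q$-Whittaker measure written in the Proposition and the noise $\chi'$ it produces carries the same law as the $\chi$ of Theorem~\ref{thm:lpp}.
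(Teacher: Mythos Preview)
Your proposal is correct and follows exactly the route the paper indicates: the paper does not spell out a proof of the Proposition but merely states that it results from combining Theorem~\ref{thm:lpp} (giving $L+\chi=\lambda_1$) with the periodic-Schur/$q$-Whittaker identity of~\cite{IMS21}, and the identification of the $\lambda$-marginal of~\eqref{eq:cyl_dist} as the periodic Schur measure $\P(\lambda)\propto\sum_\mu q^{N|\mu|}\bigl[s_{\lambda/\mu}(\sqrt a\,q^{1/2},\dots,\sqrt a\,q^{N-1/2})\bigr]^2$ that you sketch is precisely what appears in Section~3 of the paper. Your ``main obstacle'' is thus already handled in the paper's own computations.
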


The importance of such measures was first realized by Borodin--Corwin~\cite{BC14} who showed that $\rho_1$ (for $\rho$ $q$-Whittaker distributed for certain specializations of symmetric functions \emph{different from the ones above}) converges to the height function of the so-called stochastic heat equation (a relative of the Kardar--Parisi--Zhang or KPZ equation). 

Our main asymptotic result of this paper together with the discussion in this section seem to then suggest that one can also see the hard edge of random matrix ensembles in the long-time behavior of KPZ models. Notice that the finite temperature Airy kernel is already known to appear in the solution of the KPZ equation at finite time with wedge initial conditions. See for example~\cite{ACQ11}. We plan to investigate this further in future work.

\section{Proof of Theorem~\ref{thm:main}}

We now attempt to sketch a proof of Theorem~\ref{thm:main}. We start from the measure~\eqref{eq:cyl_dist}. Via the representation of~\eqref{eq:interlacing}, it is not hard to see that the measure can be rewritten as
\begin{equation}
    \P(\Lambda) = Z^{-1} (q^{N})^{|\mu|}  \prod_{-N \leq i \leq -1} s_{ \lambda^{(i+1)} / \lambda^{(i)}} (\sqrt{a} q^{|i|-1/2}) \prod_{1 \leq i \leq N} s_{ \lambda^{(i-1)} / \lambda^{(i)}} (\sqrt{a} q^{i-1/2}) 
\end{equation}
where we recall $\lambda^{(\pm N)} = \mu, \lambda^{(0)} = \lambda$ by definition. Here $s_{\lambda / \mu}$ are the skew Schur polynomials~\cite[Ch.~1]{Mac95}, and they are evaluated in one variable, so the above is immediate as $s_{\lambda / \mu} (x_1) = x_1^{|\lambda| - |\mu|} \Id_{\mu \prec \lambda}$. By the branching rule we have that the $\lambda$ marginal is
\begin{equation}
    \P(\lambda) \propto \sum_{\mu} q^{N |\mu|} \left[ s_{\lambda/\mu} (\sqrt{a} q^{\tfrac12}, \dots, \sqrt{a} q^{N-\tfrac12})\right]^2
\end{equation} 
and it turns out such measures, called cylindric or periodic Schur measures, were studied before by Borodin (who also derived bulk asymptotics for periodic plane partitions) and Betea--Bouttier (who derived edge asymptotics for a simpler periodic Plancherel measure). 

The point process associated to the partition $\lambda$, that is the set $\{\lambda_i - i + 1/2\}$, is nonetheless not determinantal (despite a certain determinantal structure already present above). However, the \emph{shift-mixed or grand canonical} point process $\{ \lambda_i - i + 1/2 + c \}$, where $c$ is a random variable independent of everything else with explicit distribution $\P(c) = t^c q^{N c^2/2} / \theta_3(t; q^N)$ is indeed determinantal, with $t$ a parameter which we can fix to be 1 throughout\footnote{This shift is sometimes called ``passage to the grand canonical ensemble'' in the physics literature.}. (Here $\theta_3 (t; u) := \sum_{c \in \Z} t^c u^{c^2/2} = (u; u)_\infty (-\sqrt{u} z; u)_\infty (-\sqrt{u}/z; u)_\infty$ is one of the Jacobi theta functions.) Precisely it means that if $\{ k_1, \dots, k_n \} \in \Z+\tfrac12$ is a finite set of positions, the following determinantal identity holds~\cite{Bor07, BB19}:
\begin{equation}
    \P(\{ k_1, \dots, k_n \} \in \{ \lambda_i - i + 1/2 + c \}) = \det_{1 \leq i,j \leq n} K(k_i, k_j)
\end{equation}
where $K$ is the following discrete kernel/operator (below we follow notation from~\cite{BB19}):
\begin{equation}
    K(k, k') = \frac{1}{(2 \pi i)^2} \int_{|z| = 1+\delta} \frac{dz}{z^{k+1}} \int_{|w| = 1-\delta} \frac{dw}{w^{-k'+1}} \cdot \frac{F(z)}{F(w)} \cdot \kappa(z, w)
\end{equation}
where $\delta$ is a small positive number and 
\begin{equation}
    F(z) := \frac{(\sqrt{a} q^{1/2}/z; q)_\infty}{(\sqrt{a} q^{1/2} z; q)_\infty}, \quad \kappa(z, w) := \sqrt{\frac{w}{z}} \cdot \frac{ (q^N; q^N)^2_\infty } { \theta_{q^N} (w/z) } \cdot \frac{\theta_3(t \frac{z}{w}; q^N)} {\theta_3(t; q^N)}. 
\end{equation}
(Here $\theta_u(x) := (x; u)_\infty (u/x; u)_\infty$ is another theta function, in multiplicative notation.) The contour integrals above signify coefficient extraction, and make sense analytically (this becomes important for asymptotic analysis!) as long as the $z$ contour is outside the $w$ contour and both are very close to the unit circle\footnote{A sufficient set of conditions has i) $1 < \frac{|z|}{|w|} = \frac{1+\delta}{1-\delta} < q^{-N}$; ii) $1+\delta = |z| < (aq)^{-1/2}$ and iii) $1-\delta = |w| > (aq)^{1/2}$---see~\cite{BB19}.}.  

The principle of inclusion-exclusion then gives that the distribution of $L = \lambda_1 - 1/2 + c$ is a discrete Fredholm determinant of $K$: $\P(L \leq \ell) = \det(1-K)_{l^2 \{\ell+1/2,\ell+3/2,\dots\}}$. 

We now show, in the scaling regime of part (i), that $\det(1-K) \to \det(1-K^{(N)}_{\alpha, \Bessel})$ in the limit $\ell = \epsilon^{-1}s + 2 \epsilon^{-1} \log \epsilon^{-1}$ as $\epsilon \to 0+$. We only show that $\epsilon^{-1} K(k, k') \to K^{(N)}_{\alpha, \Bessel} (x, y)$ for $(k, k') = \epsilon^{-1}(x, y) + 2 \epsilon^{-1} \log \epsilon^{-1}$. One then further shows, using a similar argument, that $\tr K$ on ${l^2 \{\ell+\tfrac12,\ell+\tfrac32,\dots\}}$ converges to $\tr K^{(N)}_{\alpha, \Bessel}$ on $L^2(s, \infty)$ to conclude. There are analytic justifications needed for interchanging limits and integrals, based on dominated convergence, that are implicit everywhere and which we omit. 

Let us then prove $\epsilon^{-1} K (k, k') \to K^{(N)}_{\alpha, \Bessel} (x, y)$ in the regime above-described. First we use the fact that the infinite-length $q$-Pochhammer symbol (the $q$-Gamma function) converges to the Gamma function:
\begin{equation}
    \log (q^c; q)_{\infty} = -\frac{\pi^2}{6} \epsilon^{-1} + \left( \frac{1}{2} - c \right) \log \epsilon + \frac{1}{2} \log(2 \pi) - \log \Gamma(c) + O(\epsilon)
\end{equation}
with $q = e^{-\epsilon}, \epsilon \to 0+, c \notin -\N$. To keep the discrete kernel $\epsilon^{-1} K(k, k')$ finite we change variables $(z, w) = (e^{\epsilon \zeta}, e^{\epsilon \omega})$ and notice the contours transform to vertical up-oriented lines close to the imaginary axis, the $\zeta$ on the right and the $\omega$ on the left. Finally in this limit we can use Lemma 5.5 of~\cite{BB19} to estimate $\kappa(z, w) \sim \frac{\pi}{\epsilon N \sin \frac{(\zeta - \omega)}{N}}, \epsilon \to 0+$. Putting this all together, $\epsilon K(k, k')$ converges to the following kernel:
\begin{equation}
    (x, y) \mapsto \frac{1}{(2 \pi i)^2} \int_{i \R + \eta}  d \zeta \int_{i \R - \eta}  d \omega \frac{f(\zeta) e^{-x \zeta}}{f(\omega)e^{-y \omega}} \frac{\pi}{N \sin \frac{(\zeta - \omega)}{N}}, \quad f(\zeta) := \frac{\Gamma(\frac{\alpha+1}{2} + \zeta)}{\Gamma(\frac{\alpha+1}{2} - \zeta)}
\end{equation}
(for some $\eta > 0$ small enough). To arrive at the stated form, we first notice $\frac{\pi}{ N \sin \frac{(\zeta - \omega)}{N}} = \int_{-\infty}^\infty \frac{e^{(N+\omega-\zeta)v} dv}{1+e^{Nv}}$. In the triple integral we observe Bessel functions appearing explicitly via~\cite[eq.~10.19.22]{DLMF} $J_\alpha(X) = \frac{1}{2 \pi i} \int_{-i \infty}^{i \infty} \frac{\Gamma(-Z) (X/2)^{\alpha + 2 Z} dZ}{\Gamma(\alpha+1+Z)}$
(with $Z = \zeta - \frac{\alpha+1}{2}, X = 2 \sqrt{e^{-x-v}}$, and similarly for $\omega$ and $y$). A final substitution $u=e^{-v}$ gives the stated form.

Finally, so far we obtained asymptotics for the shifted quantity $\lambda_1 + c - 1/2$, but the difference, after scaling, is $\epsilon (c-1/2)$ which converges to 0 in probability so the shift can be removed and part (i) follows.

We now turn to the proof of part (ii), and the only difference with the above is the asymptotic analysis. Let us first put $R = \epsilon^{-1} \to \infty$ and $b = \sqrt{a}$ for notational convenience. The strategy is the same, but now we will prove we have $R^{1/3} K(k, k') \to K^{(\beta c_2)}_\Airy (x, y)$ with $K$ the discrete kernel from above and with $(k, k') = c_1 R + (x, y) c_2 R^{1/3}$ where we recall $c_1 = -2 \log(1-b), c_2 = 2^{-1/3} b^{1/3} (1-b)^{-2/3}$.

As $0<b<1$ is fixed, we can use the estimate 
\begin{equation}
    (B q; q)_{\infty} \sim -\epsilon^{-1} \Li(B), \quad q = e^{-\epsilon}, \quad \epsilon \to 0+
\end{equation} 
(if $B$ is away from $0$ and $1$). Here $\Li$ is the dilogarithm function. Asymptotically then we estimate $R^{1/3} K(k, k')$ to be 
\begin{equation}
    \frac{R^{1/3}}{(2 \pi i)^2} \oint \oint \frac{e^{R (S(z) - S(w))}} {z^{k_0+1} w^{-k'_0+1}} \cdot \kappa(z, w)dzdw 
\end{equation}
where $S(z) = \Li(bz) - \Li(b/z) - c_1 \log z$ and $(k_0, k'_0) = (k, k') - c_1 R$. The main contribution of this integral then comes from the double critical point of the action $S$, and the argument as in~\cite[Prop.~5.1]{BB19} goes through with little modification. The double critical point of $S$ is at $z=1$, and to see a finite contribution of the action we rescale $z$ and $w$ zooming in around 1 as $(z, w) = (e^{\zeta R^{-1/3}/c_2}, e^{\omega R^{-1/3}/c_2})$. The action $S$ has a Taylor expansion around 1 given by 
\begin{equation}
    S(1) + \frac{S''' (1)} {6} \frac{\zeta^{3}}{c_2^3} - x \zeta + O(R^{-1/3}) = S(1) + \frac{\zeta^{3}}{3} - x \zeta + O(R^{-1/3})
\end{equation}
where $c_2$ was chosen explicitly to cancel $S'''(1)$. Furthermore, let us note that as $N = \beta R^{2/3}$ we have $q^N = e^{- \beta R^{-1/3}}$ (this is the ``important'' parameter in $\kappa(z, w)$) and so again we can estimate $\kappa(z, w) \sim \frac{\pi}{R^{1/3} \beta c_2 \sin \frac{\zeta - \omega}{\beta c_2}}$.

We then arrive, as $R \to \infty$, to the following limiting kernel for $R^{1/3} K(k, k')$:
\begin{equation}
    (x, y) \mapsto \frac{1}{(2 \pi i)^2} \int_{i \R + \eta} d \zeta \int_{i \R - \eta} d \omega \frac{e^{\zeta^3/3 - x \zeta}}{e^{\omega^3/3 - y \omega}} \frac{\pi} {\beta c_2 \sin \frac{\zeta - \omega}{\beta c_2}} 
\end{equation}
(for some $\eta$ small enough). We use the same integral trick as in part (i) above to transform $\frac{\pi}{\beta c_2 \sin \frac{\zeta - \omega}{\beta c_2}}$ and we recognize that $\Ai(x) = \frac{1}{2 \pi i}\int_{\eta+i \R} e^{\zeta^3/3 - x \zeta} d \zeta$. This finally leads to the stated form of $K^{(\beta c_2)}_\Airy (x, y)$ and modulo the same analytical arguments omitted above, we conclude the proof.

\section{Conclusion}

In this work, using a mixture of algebraic combinatorics, mathematical physics, and asymptotic analysis, we analyzed certain asymptotic distributions of peaks of simple cylindric plane partitions. Our work is far from exhaustive and builds on work of the authors~\cite{BO21} and of Betea--Bouttier~\cite{BB19}, and these works already build and expand upon work of Okounkov~\cite{Oko01}, Okounkov--Reshetikhin~\cite{OR03}, and Borodin~\cite{Bor07}. 

We identify four further directions worth investigating, besides what was already commented on in the remarks in the introduction. One is to consider cylindric plane partitions with more complex features (e.g.~different profiles, more than one peak, etc.), or indeed other types of tilings like domino tilings, and to study their ``edge'' asymptotic behavior. We plan to address this in future work. The second is to make our treatment of last passage percolation on a cylinder/reflecting strip systematic, and this is current joint work in progress of the first author with J. Bouttier, P. Nejjar, and M. Vuleti\'c. Thirdly, we would like to explore possible connections to the stochastic heat equation and the KPZ equation in more detail, via the bridge built in~\cite{IMS21} and the partition function of $\log-\Gamma$ polymers. Fourth, and as results of~\cite{BO21} show, it should be easy to obtain finite temperature analogues of other random matrix-type kernels like the Meijer-G kernel for example. We note however that we lose a direct connection to Muttalib--Borodin ensembles that was present in~\cite{BO21}.


\printbibliography

\end{document}